\def\ps@pprintTitle{%
    \let\@oddhead\@empty
    \let\@evenhead\@empty
    \let\@oddfoot\@empty
    \let\@evenfoot\@oddfoot}
\newtheorem{theorem}{Theorem}[section]
\newtheorem{lem}[theorem]{Lemma}
\newtheorem{prop}[theorem]{Proposition}
\newtheorem{rmk}{Remark}
\newenvironment{proof}{\paragraph{Proof}}{\hfill$\square$}
\numberwithin{equation}{section}
\def \d {\mathrm{d}}
\begin{document}

\begin{frontmatter}

%% Title, authors and addresses

%% use the tnoteref command within \title for footnotes;
%% use the tnotetext command for theassociated footnote;
%% use the fnref command within \author or \affiliation for footnotes;
%% use the fntext command for theassociated footnote;
%% use the corref command within \author for corresponding author footnotes;
%% use the cortext command for theassociated footnote;
%% use the ead command for the email address,
%% and the form \ead[url] for the home page:
%% \title{Title\tnoteref{label1}}
%% \tnotetext[label1]{}
%% \author{Name\corref{cor1}\fnref{label2}}
%% \ead{email address}
%% \ead[url]{home page}
%% \fntext[label2]{}
%% \cortext[cor1]{}
%% \affiliation{organization={},
%%            addressline={}, 
%%            city={},
%%            postcode={}, 
%%            state={},
%%            country={}}
%% \fntext[label3]{}

\title{Inverse problems for time-fractional Schr\"odinger equations} %% Article title

%% use optional labels to link authors explicitly to addresses:
%% \author[label1,label2]{}
%% \affiliation[label1]{organization={},
%%             addressline={},
%%             city={},
%%             postcode={},
%%             state={},
%%             country={}}
%%
%% \affiliation[label2]{organization={},
%%             addressline={},
%%             city={},
%%             postcode={},
%%             state={},
%%             country={}}

\author{Salah-Eddine Chorfi$^{*,a}$} %% Author name
\ead{s.chorfi@uca.ac.ma}
\author{Fouad Et-tahri$^b$}
\ead{fouad.et-tahri@edu.uiz.ac.ma}
\author{Lahcen Maniar$^{a,c}$}
\ead{maniar@uca.ac.ma}
\author{Masahiro Yamamoto$^{d,e}$}
\ead{myama@ms.u-tokyo.ac.jp}

%% Author affiliation
\affiliation{organization={$^*$Corresponding author. $^a$Cadi Ayyad University, UCA, Faculty of Sciences Semlalia, Laboratory of Mathematics, Modeling and Automatic Systems},%Department and Organization
            addressline={B.P. 2390}, 
            city={Marrakesh},
            postcode={40000},
            country={Morocco}}
\affiliation{organization={Faculty of Sciences-Agadir, Lab-SIV, Ibn Zohr University},%Department and Organization
            addressline={B.P. 8106}, 
            city={Agadir},
            postcode={80000},
            country={Morocco}}
\affiliation{organization={The UM6P Vanguard Center, Mohammed VI Polytechnic University},%Department and Organization
            addressline={Hay Moulay Rachid}, 
            city={Ben Guerir},
            postcode={43150},
            country={Morocco}}
\affiliation{organization={Graduate School of Mathematical Sciences, The University of Tokyo},%Department and Organization
            addressline={Komaba, Meguro}, 
            city={Tokyo},
            postcode={153-8914},
            country={Japan}}
\affiliation{organization={Faculty of Science, Zonguldak Bulent Ecevit University},%Department and Organization
            %addressline={}, 
            city={Zonguldak},
            postcode={67100},
            country={Turkey}}

%% Abstract
\begin{abstract}
%% Text of abstract
We study some inverse problems for time-fractional Schr\"odinger equations involving the Caputo derivative of fractional order $\alpha \in (0,1)$. We prove refined uniqueness results from sets of positive Lebesgue measure for various problems by weakening the regularity of initial data.
\end{abstract}

%%Research highlights
%\begin{highlights}
%\item Inverse problems for time-fractional Schr\"odinger equations
%\item Uniqueness for inverse problems associated with fractional Schr\"odinger equations
%\item Improving the regularity of initial data and the type of %measurement sets
%\end{highlights}

%% Keywords
\begin{keyword}
%% keywords here, in the form: keyword \sep keyword
Fractional Schr\"odinger equation \sep Inverse problem \sep Uniqueness \sep Caputo derivative

%% PACS codes here, in the form: \PACS code \sep code

\MSC 35R11 \sep 35R30 \sep 35R25
%% or \MSC[2008] code \sep code (2000 is the default)

\end{keyword}

\end{frontmatter}

%% Add \usepackage{lineno} before \begin{document} and uncomment 
%% following line to enable line numbers
%% \linenumbers

%% main text
%%

%% Use \section commands to start a section
\section{Introduction and main results}
Throughout the paper, let $0<\alpha<1$, $d\ge 1$ be an integer, $T>0$ be a fixed terminal time, and $\Omega$ be a bounded domain in $\mathbb{R}^{d}$ with sufficiently smooth boundary $\partial \Omega$. We consider the following time-fractional Schr\"odinger equation:
\begin{equation}\label{fseq}
(\mathcal{P}_{f,y_0})\quad
\begin{cases}
\mathrm{i}\,\partial_t^\alpha y(t,x) + (\mathcal{L} y)(t,x)=f(t,x), &\qquad t \in (0,T), \quad x\in \Omega,\\
y(t,x)=0, &\qquad t \in (0,T), \quad x\in \partial\Omega,\\
y(0,x)=y_0(x), &\hspace{2.9cm} x\in \Omega,
\end{cases}
\end{equation}
where $\mathrm{i}$ denotes the imaginary unit satisfying $\mathrm{i}^2=-1$, $\partial_t^\alpha y$ denotes the Caputo fractional derivative of order $\alpha\in (0,1)$, which is defined for a suitable function $y$ by
\begin{equation}\label{cap01}
\partial_t^\alpha y(t)=\frac{1}{\Gamma(1-\alpha)} \int_0^t(t-\tau)^{-\alpha}\partial_\tau y(\tau)\, \d \tau.
\end{equation}
Above, $\Gamma$ denotes the Euler Gamma function. We assume that the operator $\mathcal{L}$ is symmetric and uniformly elliptic on $\overline{\Omega}$; see Section \ref{sec2} for the details. Moreover, $f$ is a source term, and $y_0$ is an initial datum.

In \cite{Na04}, Naber derived the time-fractional Schr\"odinger equation by mapping the time-fractional diffusion equation and introducing the factors $\mathrm{i}$ or $\mathrm{i}^\alpha$ in \eqref{fseq}. On the other hand, Achar et al. \cite{Ach13} obtained \eqref{fseq} via the Feynman path integral. This model allows for the recovery of the standard Schr\"odinger equation as $\alpha\to 1^-$. Such equations are used in various areas of physics, including quantum mechanics, optics, and plasma physics; see, e.g., \cite{CC24, CC23, La18} and the cited bibliographies. We emphasize that the literature on inverse problems for such models is sparse, and we can only refer to \cite{AS23, CEMY25} and the references therein. It is worth noting that, while our analysis focuses on \eqref{fseq} with the factor $\mathrm{i}$, it also applies to the formulation involving $\mathrm{i}^\alpha, \; \alpha\in (0,1)\cup (1,2)$.

Henceforth, $E \subset \Omega$ will denote an arbitrary set of positive measure, and the equality of functions on a measurable set refers to equality almost everywhere.

\textbf{Inverse initial data problem.} Determine the initial datum $y_0$ from the knowledge of the measurement $y$ in $(0,T)\times E$ of the solution to $(\mathcal{P}_{0,y_0})$.

The first result concerns the uniqueness for initial data in 
$(\mathcal{P}_{0,y_0})$. 
\begin{theorem}\label{thm1}
Let $y_0 \in L^2(\Omega)$ and $y$ satisfy $(\mathcal{P}_{0,y_0})$. Then $y=0$ in $(0, T)\times E$ implies $y_0=0$ in $\Omega$.
\end{theorem}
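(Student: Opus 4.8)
The plan is to combine the spectral representation of the solution, the analyticity in time that the Mittag--Leffler functions provide when $\alpha\in(0,1)$, a Laplace transform argument to isolate each spectral component of $y_0$, and the strong unique continuation property for the elliptic operator $\mathcal L$.

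First I would expand $y$ along the eigensystem of the Dirichlet realization of $\mathcal L$. Denoting by $\{\mu_k\}_{k\ge1}$ its distinct eigenvalues and by $P_k$ the $L^2(\Omega)$-orthogonal projection onto the $\mu_k$-eigenspace, and rewriting the equation of $(\mathcal P_{0,y_0})$ as $\partial_t^\alpha y=\mathrm i\,\mathcal Ly$, the solution admits the representation
\[
y(t,\cdot)=\sum_{k\ge1}E_\alpha(\mathrm i\mu_k t^\alpha)\,P_ky_0,\qquad t>0,
\]
where $E_\alpha$ is the Mittag--Leffler function of parameter $\alpha$. The decisive consequence of $0<\alpha<1$ is the bound $|E_\alpha(\mathrm i\mu_k t^\alpha)|\le C/(1+|\mu_k|t^\alpha)$, valid because $|\arg(\mathrm i\mu_k t^\alpha)|=\pi/2>\pi\alpha/2$; it already gives $\|y(t,\cdot)\|_{L^2(\Omega)}\le C\|y_0\|_{L^2(\Omega)}$ for every $t>0$, and, after extending the same estimate to $t$ in a small complex sector around $(0,\infty)$, it shows that $t\mapsto y(t,\cdot)$ is analytic from $(0,\infty)$ into $L^2(\Omega)$, the series and its termwise derivatives converging locally uniformly. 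It is precisely this smoothing for $t>0$ that allows one to start from $y_0$ merely in $L^2(\Omega)$; note that the bound fails at $\alpha=1$, in accordance with the classical Schr\"odinger equation.

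Next I would propagate the vanishing from $(0,T)$ to $(0,\infty)$ and separate the modes. The hypothesis means that the $L^2(E)$-valued real-analytic function $t\mapsto y(t,\cdot)|_E$ vanishes on $(0,T)$, hence on all of $(0,\infty)$. Applying the Laplace transform in $t$ --- legitimate by the boundedness above and computable term by term via $\int_0^\infty e^{-st}E_\alpha(\mathrm i\mu_k t^\alpha)\,\d t=s^{\alpha-1}/(s^\alpha-\mathrm i\mu_k)$ --- we obtain, for $\mathrm{Re}\,s>0$, that $\sum_{k\ge1}\frac{s^{\alpha-1}}{s^\alpha-\mathrm i\mu_k}P_ky_0$ vanishes on $E$. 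Setting $\zeta=s^\alpha$, the $L^2(\Omega)$-valued function $G(\zeta):=\sum_{k\ge1}(\zeta-\mathrm i\mu_k)^{-1}P_ky_0$ is holomorphic on the connected set $\mathbb C\setminus\{\mathrm i\mu_k:k\ge1\}$ and has a simple pole at each $\mathrm i\mu_k$ with residue $P_ky_0$. Since $G|_E$ vanishes on the open sector $\{\zeta\ne0:|\arg\zeta|<\pi\alpha/2\}$, the identity theorem forces $G|_E\equiv0$, and taking the residue at $\mathrm i\mu_k$ yields $(P_ky_0)|_E=0$ for every $k$.

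Finally, each $P_ky_0$ is a finite linear combination of Dirichlet eigenfunctions, hence a solution of the second-order elliptic equation $\mathcal L(P_ky_0)=\mu_kP_ky_0$ in $\Omega$ that is smooth by interior regularity and vanishes on the set $E$ of positive measure. The strong unique continuation property for second-order elliptic operators then gives $P_ky_0\equiv0$ in $\Omega$ for all $k$, so that $y_0=\sum_k P_ky_0=0$. I expect the main obstacle to be the analytic/Laplace step at the low regularity $y_0\in L^2(\Omega)$: one must carefully justify the time-analyticity of $y(t,\cdot)$ and all term-by-term operations under the transform, everything resting on the Mittag--Leffler decay estimate that is available exactly because $\alpha<1$; the unique continuation input is classical, and this is where the smoothness hypotheses on $\mathcal L$ and $\partial\Omega$ are used.
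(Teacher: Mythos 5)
Your proposal is correct and follows essentially the same route as the paper: spectral expansion with Mittag--Leffler factors, analyticity in $t$ to extend the vanishing to $(0,\infty)$, termwise Laplace transform and the substitution $\zeta=s^\alpha$ to get a meromorphic $L^2(E)$-valued function whose residues are the projections $P_k y_0$, and finally Robbiano-type unique continuation from a set of positive measure. The only differences are cosmetic (sign conventions for the eigenvalues and extracting residues directly rather than by a contour integral around each pole).
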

This theorem improves the corresponding uniqueness in \cite[Theorem 4.2]{SY11} 
concerning the time-fractional diffusion equations in two directions: 
(i) it removes the regularity of initial data (needed for the Sobolev embedding) and (ii) it holds for a subset of positive measure instead of a subdomain. An application of this result in control theory is the approximate controllability of the adjoint problem of $(\mathcal{P}_{0,y_0})$ by controls acting on any subset of positive measure.

\begin{rmk} Some remarks are in order
\begin{itemize}
    \item[(i)] In the 1D case $d=1$, it suffices for $E$ to be only 
an infinite set, since each eigenfunction has only finitely many zeros. Moreover, for $\Omega=(0,1)$ and $\mathcal{L}=\frac{\d^2}{\d x^2}$, all the eigenvalues are simple. If for some $x_0\in (0,1)\setminus \mathbb{Q}$, $y(t,x_0)=0, \; t\in (0,T)$, then $y_0=0$ in  $(0,1)$.
    \item[(ii)] In some 2D cases, one can obtain the uniqueness from a discrete set; see, e.g., \cite[Theorem 3.2]{Bu93}.
    \item[(iii)] Sets of positive Lebesgue measure are not the sharpest possible. One can consider sharper sets with positive Hausdorff measure as in 
\cite[Th\'eor\`eme 2]{Ro88}.
\end{itemize}

\end{rmk}

Now, we consider some related inverse problems:

\textbf{Inverse source problem.} We assume that $f(t,x)=\rho(t)g(x)$, where $\rho$ is a given temporal factor. Determine the spatial component $g$ in $\Omega$ from the knowledge of the measurement $y$ in $(0,T)\times E$ of the solution to $(\mathcal{P}_{\rho g,0})$.

\begin{theorem}\label{thm2}
Let $g \in L^2(\Omega)$ and $\rho\in L^2(0,T)$ such that $\rho\not\equiv 0$ in $(0,T)$. Let $y$ satisfy $(\mathcal{P}_{\rho g,0})$. Then $y=0$ in $(0, T)\times E$ implies $g=0$ in $\Omega$.
\end{theorem}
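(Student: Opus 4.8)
The plan is to use the eigenfunction expansion of the solution together with the asymptotic behavior of the relevant Mittag-Leffler functions, reducing the problem to a uniqueness statement for a generalized Dirichlet series on $(0,T)$ — exactly as in Theorem~\ref{thm1}, but with a different temporal profile because the data is a source term rather than an initial condition. Let $\{\lambda_n\}_{n\ge 1}$ be the eigenvalues of $\mathcal L$ (counted with multiplicity) and $\{\varphi_n\}$ the corresponding orthonormal eigenfunctions. Writing $g=\sum_n g_n\varphi_n$ with $g_n=(g,\varphi_n)_{L^2(\Omega)}$, the solution of $(\mathcal P_{\rho g,0})$ admits the representation
\begin{equation}\label{eq:rep-src}
y(t,x)=\sum_{n\ge 1} g_n\left(\frac{1}{\mathrm i}\int_0^t (t-\tau)^{\alpha-1} E_{\alpha,\alpha}\!\bigl(\tfrac{\lambda_n}{\mathrm i}(t-\tau)^\alpha\bigr)\rho(\tau)\,\d\tau\right)\varphi_n(x)=:\sum_{n\ge1} g_n\, u_n(t)\,\varphi_n(x),
\end{equation}
where $E_{\alpha,\alpha}$ is the two-parameter Mittag-Leffler function; the series converges in $L^2(\Omega)$ for each $t$ by the smoothing estimates recalled in Section~\ref{sec2}. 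First I would fix the convention on $\rho$ outside $(0,T)$ (say extended by zero) so that $u_n$ is well defined on all of $(0,T)$, and record that $u_n\in L^2(0,T)$ with norms controlled by $\|\rho\|_{L^2(0,T)}$ and a negative power of $\lambda_n$, which is what makes \eqref{eq:rep-src} legitimate.

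The hypothesis $y=0$ on $(0,T)\times E$ means $\sum_n g_n u_n(t)\varphi_n(x)=0$ for a.e.\ $x\in E$ and a.e.\ $t\in(0,T)$. Grouping the eigenfunctions by distinct eigenvalues, $0<\mu_1<\mu_2<\cdots$, and letting $P_k$ be the orthogonal projection onto the eigenspace of $\mu_k$, I would argue exactly as in the proof of Theorem~\ref{thm1}: because the functions $t\mapsto u_n(t)$ corresponding to one eigenvalue $\mu_k$ all coincide (they depend only on $\lambda_n=\mu_k$), say equal to $w_k(t)$, we get $\sum_k w_k(t)\,(P_k g)(x)=0$ on $(0,T)\times E$. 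The key analytic step is then to show that the scalar functions $w_1,\dots,w_k,\dots$ are linearly independent over $(0,T)$ in a strong enough sense that one may "peel off" one eigenvalue at a time; equivalently, one shows that if $\sum_k c_k w_k\equiv 0$ on $(0,T)$ for scalars (or, after testing against $E$, for fixed $x$) then each $c_k=0$ whenever $w_k\not\equiv0$, and that $w_k\not\equiv0$ precisely because $\rho\not\equiv0$. This separation is accomplished via Laplace transform: since $\widehat{u_n}(s)=\dfrac{\widehat\rho(s)}{\mathrm i s^\alpha+\lambda_n}$ (using $\mathcal L\big(\int_0^t(t-\tau)^{\alpha-1}E_{\alpha,\alpha}(\tfrac{\lambda}{\mathrm i}(\cdot)^\alpha)\rho\big)(s)=\widehat\rho(s)/(\mathrm i s^\alpha+\lambda)$), the poles of the meromorphic continuation in $s^\alpha$ are located at $s^\alpha=\mathrm i\mu_k$, which are distinct; a standard residue/analytic-continuation argument (as in \cite{SY11}) then forces $\widehat\rho(s)\,(P_kg)(x)=0$ for each $k$, hence $(P_kg)(x)=0$ for a.e.\ $x\in E$ since $\rho\not\equiv0$ gives $\widehat\rho\not\equiv0$.

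Once $(P_k g)(x)=0$ for a.e.\ $x\in E$ and every $k$, I invoke the unique continuation property for the elliptic eigenfunctions: each $P_k g$ is a finite linear combination of eigenfunctions of $\mathcal L$ with eigenvalue $\mu_k$, hence is real-analytic in $\Omega$ (elliptic regularity with analytic — or at least $C^\infty$ plus the Aronszajn/unique-continuation — argument, exactly as used for Theorem~\ref{thm1}), so vanishing on a set $E$ of positive Lebesgue measure forces $P_k g\equiv0$ in $\Omega$. Summing over $k$ yields $g=\sum_k P_k g=0$ in $\Omega$, which is the claim. I expect the main obstacle to be the careful justification of the Laplace-transform/residue separation step in the present complex setting: the symbol $\mathrm i s^\alpha+\lambda_n$ has its zeros off the positive real axis (because of the factor $\mathrm i$), so one must be slightly more careful than in the diffusion case about the branch of $s^\alpha$ and the contour used, and about the decay of $\widehat\rho$ needed to close the argument — but since $\rho\in L^2(0,T)$ is compactly supported in time, $\widehat\rho$ is entire of exponential type and the manipulations are valid. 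The rest is routine once Theorem~\ref{thm1}'s machinery is in place; indeed, Theorem~\ref{thm2} can largely be deduced by the same scheme with $u_n$ replacing the Mittag-Leffler factor $E_{\alpha,1}(\tfrac{\lambda_n}{\mathrm i}t^\alpha)$ of the initial-value case.
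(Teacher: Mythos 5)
Your overall architecture (expand in eigenfunctions, separate the contributions of the distinct eigenvalues $\mu_k$, then kill each projection $P_k g$ on $E$ by unique continuation from a set of positive measure) is the right one, and the final step matches what the paper uses. The genuine gap is in the middle: the Laplace-transform/residue separation you propose is not available with the data you actually have, and the obstruction is not the one you flag (branch of $s^{\alpha}$, contours, decay of $\widehat{\rho}$). The identity $\widehat{u_n}(s)=\widehat{\rho}(s)/(\mathrm{i}s^{\alpha}+\lambda_n)$ requires knowing $u_n$ on all of $(0,\infty)$, whereas the hypothesis only gives $\sum_k w_k(t)\,P_kg=0$ in $L^2(E)$ for $t\in(0,T)$. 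In the initial-value problem (Theorem \ref{thm1}) this is overcome because $t\mapsto y(t,\cdot)$ extends analytically to a sector containing the positive real axis, so vanishing on $(0,T)$ propagates to all $t>0$ and the Laplace transform can then be taken there. For the source problem this fails: $u_n=\rho\ast\kappa_n$ with $\kappa_n(t)=t^{\alpha-1}E_{\alpha,\alpha}(-\mathrm{i}\lambda_n t^{\alpha})$ is only as regular in $t$ as $\rho\in L^2(0,T)$ and is not analytic, so the vanishing cannot be extended beyond $t=T$, and you never obtain the transformed identity $\sum_k\widehat{\rho}(s)\langle P_kg,\theta\rangle/(\mathrm{i}s^{\alpha}+\mu_k)=0$ on which the residue argument rests.

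The missing idea is the combination of Duhamel's principle with Titchmarsh's convolution theorem. The paper writes $J^{1-\alpha}y(t,\cdot)=\int_0^t\rho(t-s)\,v(g)(s,\cdot)\,\d s$, where $v(g)$ solves the homogeneous problem $(\mathcal{P}_{0,g})$; the hypothesis then says that the scalar convolution $\rho\ast\langle v(g)(\cdot,\cdot),\theta\rangle$ vanishes on $(0,T)$ for every $\theta\in C_0^{\infty}(E)$. Titchmarsh's theorem together with $\rho\not\equiv 0$ forces $\langle v(g)(t,\cdot),\theta\rangle=0$ on some interval $(0,t_*)$, and since $v(g)$ --- unlike $y$ --- is analytic in $t>0$, this propagates to all $t>0$; Theorem \ref{thm1} applied to $v(g)$ then gives $g=0$. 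Your claimed ``linear independence of the $w_k$ over $(0,T)$'' is true, but its proof is exactly this Titchmarsh-plus-analyticity argument, so the separation step cannot be dismissed as standard. A minor additional point: with $a_{ij}\in W^{1,\infty}$ the eigenfunctions need not be real-analytic, so the correct tool for the last step is Robbiano's unique continuation from sets of positive measure rather than real-analyticity.
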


Let $y_{\alpha, u}$ be the solution to $(\mathcal{P}_{0,u})$ associated with order $\alpha \in (0,1)$ and initial datum $u\in L^2(\Omega)$.

\textbf{Inverse order problem}. Determine the fractional order $\alpha \in (0,1)$ from the knowledge of the measurement $y_{\alpha,u}$ in $(0,T)\times E$.

The following uniqueness result is proved similarly to \cite[Theorem 1]{Ya21}, so the proof is omitted.
\begin{theorem}\label{thm3}
Let $0<\alpha,\beta<1$ and $u,v\in L^2(\Omega)$ such that $u\neq 0$ in $\Omega$.
Then, $y_{\alpha, u}=y_{\beta, v} \quad \text{ in }\; (0,T)\times E$ implies $\alpha=\beta$. 
\end{theorem}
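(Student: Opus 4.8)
The plan is to reduce the hypothesis, via spectral decomposition and the Laplace transform in time, to an identity of generalized power series on $E$, to read off from it an infinite family of vanishing ``moments'' of $u$ on $E$, and then to invoke Theorem~\ref{thm1}. I would set $A=-\mathcal{L}$ with its Dirichlet domain, a positive self-adjoint operator with eigenpairs $\{(\lambda_n,\varphi_n)\}_{n\ge1}$ ($0<\lambda_1\le\lambda_2\le\cdots\to\infty$, $\{\varphi_n\}$ orthonormal in $L^2(\Omega)$), and use the Mittag--Leffler representation $y_{\alpha,u}(t,\cdot)=\sum_{n\ge1}\langle u,\varphi_n\rangle\,E_\alpha(-\mathrm{i}\lambda_n t^\alpha)\varphi_n$ coming from Section~\ref{sec2}, and likewise for $y_{\beta,v}$. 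Then I would argue by contradiction: suppose $\alpha\neq\beta$; by symmetry we may assume $\alpha<\beta$ (if $\alpha>\beta$ one runs the same argument with $v$ in place of $u$ and concludes via $y_{\alpha,u}|_E=y_{\beta,v}|_E$ and Theorem~\ref{thm1}).

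First I would extend the measured identity to all times. Using the standard bound $|E_\alpha(z)|\le C_\alpha(1+|z|)^{-1}$ on a sector $\mu\le|\arg z|\le\pi$ with $\pi\alpha/2<\mu<\min\{\pi,\pi\alpha\}$, for each $\psi\in L^2(\Omega)$ supported in $E$ the map $t\mapsto\langle y_{\alpha,u}(t,\cdot),\psi\rangle$ extends holomorphically to a small sector $\{t\neq0:|\arg t|<\delta_\alpha\}$, since there $-\mathrm{i}\lambda_n t^\alpha$ stays in the admissible angular range and the series converges locally uniformly; the same holds with $\beta$, so the two extensions, agreeing on $(0,T)$, agree on $(0,\infty)$ by the identity theorem, whence $y_{\alpha,u}(t,\cdot)|_E=y_{\beta,v}(t,\cdot)|_E$ in $L^2(E)$ for all $t>0$. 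Since also $\|y_{\alpha,u}(t,\cdot)\|\le C_\alpha(1+\lambda_1 t^\alpha)^{-1}\|u\|$, the Laplace transform is available on $\{\mathrm{Re}\,s>0\}$ and equals $s^{\alpha-1}(s^\alpha+\mathrm{i}A)^{-1}u$; restricting to $E$ gives, for $\mathrm{Re}\,s>0$,
\[
s^{\alpha-1}(s^\alpha+\mathrm{i}A)^{-1}u\big|_E=s^{\beta-1}(s^\beta+\mathrm{i}A)^{-1}v\big|_E\quad\text{in }L^2(E).
\]
Next, for $0<s<\min\{\lambda_1^{1/\alpha},\lambda_1^{1/\beta}\}$ I would expand by the operator-norm-convergent Neumann series $(s^\alpha+\mathrm{i}A)^{-1}=\sum_{k\ge1}(-1)^{k-1}\mathrm{i}^{-k}s^{\alpha(k-1)}A^{-k}$ (legitimate since $\|s^\alpha A^{-1}\|<1$, and crucially requiring no regularity of $u$ because it is applied to the fixed vector $u$), pair with an arbitrary $\psi\in L^2(E)$, multiply by $s$, and match the coefficients of the pure powers $s^\gamma$, $\gamma\in\{\alpha k\}_{k\ge1}\cup\{\beta k\}_{k\ge1}$ (distinct exponents being asymptotically independent as $s\to0^+$, the series being absolutely convergent). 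Because $\alpha<\beta$, any $\alpha k$ not also of the form $\beta\ell$ receives a contribution only from the left-hand side, forcing $(A^{-k}u)|_E=0$ in $L^2(E)$; writing $\beta/\alpha=p/q$ in lowest terms with $p>q\ge1$ when $\beta/\alpha\in\mathbb{Q}$ (and $p=\infty$ otherwise), this yields $(A^{-k}u)|_E=0$ for every $k\ge1$ with $p\nmid k$.

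To finish, I would consider the $L^2(E)$-valued function $\Phi(z):=\sum_{\mu\in\sigma(A)}\tfrac{z}{\mu-z}(P_\mu u)|_E$ ($P_\mu$ the spectral projection), holomorphic off $\sigma(A)$ with at most a simple pole of residue $-\mu(P_\mu u)|_E$ at each $\mu\in\sigma(A)$; for $|z|<\lambda_1$ it equals $\sum_{k\ge1}(A^{-k}u)|_E z^k$, whose coefficients vanish for $p\nmid k$, so $\Phi(\zeta z)=\Phi(z)$ near $0$, hence everywhere, for every $p$-th root of unity $\zeta$. The pole set of $\Phi$ is then invariant under multiplication by $\zeta$; since it lies in $(0,\infty)$ and $p\ge2$ (including $p=\infty$, in which case $\Phi\equiv0$ near $0$ and thus everywhere), it must be empty, i.e.\ $(P_\mu u)|_E=0$ for all $\mu\in\sigma(A)$. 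Consequently $y_{\alpha,u}(t,\cdot)|_E=\sum_{\mu}E_\alpha(-\mathrm{i}\mu t^\alpha)(P_\mu u)|_E=0$ on $(0,T)\times E$, and Theorem~\ref{thm1} forces $u=0$, contradicting the hypothesis. Hence $\alpha=\beta$.

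I expect the main obstacles to be, first, justifying the analytic continuation in $t$ for merely $L^2$ initial data — so that the Laplace transform is genuinely available on all of $(0,\infty)$ — which is where the sectorial Mittag--Leffler estimates do the work; and second, the final complex-analytic step, since matching leading-order asymptotics of $y_{\alpha,u}|_E$ and $y_{\beta,v}|_E$ is inconclusive exactly when $\beta/\alpha$ is rational and the two decay exponents coincide, and it is the $\zeta$-invariance of $\Phi$ that disposes of this case.
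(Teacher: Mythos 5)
Your argument is correct, and it follows the same overall strategy as the proof the paper points to (that of \cite[Theorem 1]{Ya21}): extend the equality of the two solutions to all $t>0$ by the sectorial analyticity of Proposition~\ref{FP}, take the Laplace transform to get $s^{\alpha-1}(s^\alpha+\mathrm{i}A)^{-1}u=s^{\beta-1}(s^\beta+\mathrm{i}A)^{-1}v$ in $L^2(E)$, and compare the resulting expansions in the fractional powers $s^{\alpha k}$ and $s^{\beta \ell}$ as $s\to 0^+$. Where you genuinely add something is in the endgame: in \cite{Ya21} a sign condition on the initial value guarantees that the \emph{first} moment $(A^{-1}u)$ does not vanish at the observation point, so comparing leading exponents suffices; here $u\in L^2(\Omega)$ carries no sign condition and the observation is on a set of positive measure, so one really must handle the degenerate case in which all the ``unmatched'' moments $(A^{-k}u)|_E$, $p\nmid k$, vanish. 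Your root-of-unity invariance $\Phi(\zeta z)=\Phi(z)$ of the resolvent-type function $\Phi(z)=\sum_{\mu}\frac{z}{\mu-z}(P_\mu u)|_E$, combined with the fact that its pole set lies in $(0,\infty)$ and the residue extraction by contour integration (exactly as in the proof of Theorem~\ref{thm1}), disposes of this case cleanly and then Theorem~\ref{thm1} (or directly the unique continuation result of \cite{Ro88} applied to each $P_\mu u$) finishes the proof. Two small points worth writing out in a final version: the coefficient identification for the generalized power series $\sum_\gamma c_\gamma s^\gamma$ should be justified by the standard peeling argument (divide by the smallest power, let $s\to0^+$, using absolute convergence on $(0,s_0)$ to control the tail), and the identity $\Phi(\zeta z)=\Phi(z)$ propagates from a neighborhood of $0$ to the connected open set $\mathbb{C}\setminus(\sigma(A)\cup\zeta^{-1}\sigma(A))$, which is what lets you compare singular behaviour at each $\mu\in\sigma(A)$.
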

%\begin{theorem}\label{thm3}
%Let $0<\alpha,\beta<1$ and $u,v\in L^2(\Omega)$.
%Then, $$y_{\alpha, u}=y_{\beta, v} \quad \text{ in }\; (0,T)\times E,$$
%if and only if 
%$$\left[\alpha=\beta \quad \text{ and } \quad y_{\alpha, u}=y_{\beta, v}\right]\;\;\mbox{or} \;\; %\left[y_{\alpha, u}=y_{\beta, v}=0\right] \quad \text{in }\; (0,T)\times\Omega.$$
%\end{theorem}

Note that in the aforementioned inverse problems, we can instead consider boundary measurement given by the Neumann data $\partial_{\nu}^a y\rvert_{(0,T)\times\gamma},$ on an arbitrarily chosen subboundary $\gamma \subset \partial\Omega$.

\section{Preliminary results}
\label{sec2}
We mainly work in the Lebesgue-space $L^{2}(\Omega)$ of $\mathbb{C}$-valued functions with the standard inner product $\langle\cdot, \cdot\rangle$, as well as the usual $L^2$-based Sobolev spaces $H^{k}(\Omega)$ and $H_{0}^{k}(\Omega)$.

Henceforth, we shall consider a symmetric and uniformly elliptic operator $\mathcal{L}$ of the form
$$
\mathcal{L} y(x)=\sum_{i,j=1}^{d} \partial_i\left(a_{i j}(x) \partial_j y(x)\right)+ p(x) y(x), \quad x \in \Omega,
$$
where the coefficients are real-valued and satisfy
$$
a_{i j} \in W^{1,\infty}(\Omega),\quad a_{i j}=a_{j i},\; 1 \leq i, j \leq d \quad\text{ and } \quad p \in L^\infty(\Omega),
$$
and there exists a constant $\kappa >0$ such that
$$
\sum_{i, j=1}^{d} a_{i j}(x) \xi_{i} \xi_{j} \ge \kappa \sum_{i=1}^{d} \xi_{i}^{2}, \quad x \in \overline{\Omega},\; \xi \in \mathbb{R}^{d}.
$$
The domain of the operator $-\mathcal{L}$ is given by $\mathcal{D}(-\mathcal{L})=H^{2}(\Omega) \cap H_{0}^{1}(\Omega).$ Since the operator $-\mathcal{L}$ is symmetric and uniformly elliptic, its spectrum $\sigma\left(-\mathcal{L}\right)$ is entirely composed of eigenvalues $\{\lambda_n\}_{n\in \mathbb{N}}$ counted according to the multiplicities. Without loss of generality, we may assume that $p\ge 0$. Then, we can set
$$0<\lambda_{1} \leq \lambda_{2} \leq \cdots \leq \lambda_n \le \cdots.$$
By $\{\varphi_{n}\}_{n\in \mathbb{N}}, \;\varphi_{n}\in H^{2}(\Omega) \cap H_{0}^{1}(\Omega),$ we denote the corresponding Hilbert basis of $L^2(\Omega)$ 
composed of eigenfunctions.

Next, we recall the Mittag-Leffler function defined by
$$
E_{\alpha, \beta}(z):=\sum_{k=0}^{\infty} \frac{z^{k}}{\Gamma(\alpha k+\beta)}, \quad z \in \mathbb{C},
$$
where $\alpha>0$ and $\beta \in \mathbb{R}$ are arbitrary parameters. We will use the following boundedness property of the Mittag-Leffler functions (see \cite[Theorem 1.6]{Pod99}).
\begin{lem}\label{mlest}
Let $0<\alpha<1$ and $\lambda\ge 0$. There exists a constant $C_0>0$, depending only on $\alpha$ and $\mu \in\left(\frac{\pi \alpha}{2}, \pi \alpha \right)$, such that
\begin{equation}\label{mlestt}
\left|E_{\alpha, 1}\left(-\mathrm{i} \lambda z^\alpha\right)\right| \leq \frac{C_0}{1+\lambda |z|^\alpha} \leq C_0, \qquad \mu \le |\arg (-\mathrm{i} z^\alpha)| \le \pi.
\end{equation}
\end{lem}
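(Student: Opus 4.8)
The plan is to obtain \eqref{mlestt} as an immediate consequence of the classical sectorial bound for Mittag-Leffler functions, \cite[Theorem 1.6]{Pod99}, which asserts that for $0<\alpha<2$, any real $\beta$, and any $\mu$ with $\frac{\pi\alpha}{2}<\mu<\min\{\pi,\pi\alpha\}$, there is a constant $C=C(\alpha,\beta,\mu)>0$ such that
\[
|E_{\alpha,\beta}(w)|\le\frac{C}{1+|w|}\qquad\text{whenever }\mu\le|\arg w|\le\pi.
\]
Since here $0<\alpha<1$ we have $\min\{\pi,\pi\alpha\}=\pi\alpha$, so the admissible range is exactly $\mu\in\left(\frac{\pi\alpha}{2},\pi\alpha\right)$, and we fix $\beta=1$; the resulting constant then depends only on $\alpha$ and $\mu$.

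First I would dispose of the degenerate case $\lambda=0$ separately: then $E_{\alpha,1}\left(-\mathrm{i}\lambda z^\alpha\right)=E_{\alpha,1}(0)=\frac{1}{\Gamma(1)}=1$, so both inequalities in \eqref{mlestt} hold as soon as $C_0\ge1$. For $\lambda>0$ I set $w:=-\mathrm{i}\lambda z^\alpha$; since $\lambda$ is a positive real scalar, $\arg w=\arg(-\mathrm{i} z^\alpha)$, so the hypothesis $\mu\le|\arg(-\mathrm{i} z^\alpha)|\le\pi$ is precisely the sectorial condition $\mu\le|\arg w|\le\pi$ required above. Applying \cite[Theorem 1.6]{Pod99} and using $|w|=\lambda|z|^\alpha$ yields
\[
\left|E_{\alpha,1}\left(-\mathrm{i}\lambda z^\alpha\right)\right|\le\frac{C}{1+\lambda|z|^\alpha}.
\]
The second inequality in \eqref{mlestt} is then trivial, since $\lambda|z|^\alpha\ge0$ forces $\frac{1}{1+\lambda|z|^\alpha}\le1$; one takes $C_0:=\max\{C,1\}$.

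Since the whole argument reduces to invoking \cite[Theorem 1.6]{Pod99} together with elementary manipulations of the modulus and the argument, I do not anticipate a genuine difficulty. The only point deserving a line of care is the identity $\arg(-\mathrm{i}\lambda z^\alpha)=\arg(-\mathrm{i} z^\alpha)$ for $\lambda>0$ (with $\lambda=0$ handled directly), which is what lets the hypothesis be phrased in terms of $\arg(-\mathrm{i} z^\alpha)$ alone; one should also note that in the PDE application $z$ will be a positive real time variable, so that $|\arg(-\mathrm{i} z^\alpha)|=\frac{\pi}{2}$ and an admissible $\mu$ exists precisely because $\alpha<1$.
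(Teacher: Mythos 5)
Your proposal is correct and coincides with the paper's treatment: the lemma is obtained there by directly invoking \cite[Theorem 1.6]{Pod99} with $\beta=1$ and $w=-\mathrm{i}\lambda z^\alpha$, exactly as you do. Your extra care with the case $\lambda=0$ and the identity $\arg(-\mathrm{i}\lambda z^\alpha)=\arg(-\mathrm{i}z^\alpha)$ for $\lambda>0$ is sound and only makes the citation more precise.
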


We have the following well-posedness result.
\begin{prop} \label{FP}
Let $0<\alpha < 1$, $y_0\in L^{2}(\Omega)$ and $f\in L^1(0,T;L^2(\Omega))$. Then, there exists a unique solution $y \in L^1(0,T;L^{2}(\Omega))\cap C\left((0,T];L^{2}(\Omega)\right)$ to $(\mathcal{P}_{f,y_0})$ such that 
%$\partial_t^\alpha y\in C\left((0,T];L^{2}(\Omega)\right)$, and 
the following estimate holds
	\begin{align*}
		&\|y\|_{L^1(0,T;L^{2}(\Omega))}\le C \left(\|y_0\|_{L^2(\Omega)}+\|f\|_{L^1(0,T;L^2(\Omega))}\right)
	\end{align*}
for some constant $C>0$. The unique solution to $(\mathcal{P}_{f,y_0})$ is given by the formula
\begin{equation}\label{solution formula}
	y(t,\cdot)=\sum_{n=1}^{\infty}\left[\left\langle y_0, \varphi_n\right\rangle E_{\alpha, 1}\left(-\mathrm{i}\lambda_n t^\alpha\right) -\mathrm{i}\int_{0}^{t}\left\langle f(s,\cdot), \varphi_n\right\rangle (t-s)^{\alpha-1}E_{\alpha,\alpha}(-\mathrm{i}\lambda_n(t-s)^{\alpha}))\,\d s\right] \varphi_n.
\end{equation}
Moreover, the unique solution to $(\mathcal{P}_{0,y_0})$ is analytically 
extendable in $L^{\infty}(\Sigma; L^2(\Omega))$, where $\Sigma$ is a 
sector defined by 
\begin{equation}\label{sector}
\Sigma=\left\{z \in \mathbb{C}\setminus\{0\} \colon \max\left(-\pi, \frac{\mu-\frac{3\pi}{2}}{\alpha}\right) \le \arg z \le \min\left(\pi, \frac{\frac{\pi}{2}-\mu}{\alpha}\right)\right\},
\end{equation}
and $\mu$ is chosen as in Lemma \ref{mlest}.
\end{prop}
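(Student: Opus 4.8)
The plan is to build the solution by separation of variables along the orthonormal eigenbasis $\{\varphi_n\}$. Note first that $\mathrm{i}\mathcal{L}$ is skew-adjoint (it generates a unitary group, not an analytic semigroup), so the parabolic-type theory of abstract fractional evolution equations does not apply directly; the role of sectoriality will instead be played by the Mittag-Leffler bound of Lemma \ref{mlest} along the complex rays $\arg(-\mathrm{i}z^\alpha)=-\tfrac{\pi}{2}+\alpha\arg z$, the ray $\arg z=0$ being admissible precisely because $\alpha<1$ lets one pick $\mu\in(\tfrac{\pi\alpha}{2},\tfrac{\pi}{2}]$. Writing $y_0=\sum_n\langle y_0,\varphi_n\rangle\varphi_n$, $f(s,\cdot)=\sum_n\langle f(s,\cdot),\varphi_n\rangle\varphi_n$ and $y(t,\cdot)=\sum_n y_n(t)\varphi_n$, and using $\mathcal{L}\varphi_n=-\lambda_n\varphi_n$, the problem $(\mathcal{P}_{f,y_0})$ decouples into the scalar Caputo problems $\partial_t^\alpha y_n=-\mathrm{i}\lambda_n y_n-\mathrm{i}\langle f(t,\cdot),\varphi_n\rangle$, $y_n(0)=\langle y_0,\varphi_n\rangle$, whose unique solutions are obtained from the Laplace transform exactly as for the scalar fractional relaxation equation; this produces the bracketed terms in \eqref{solution formula}, and summing gives the candidate $y$.

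To establish the regularity and the a priori bound, I would use Parseval's identity: by $|E_{\alpha,1}(-\mathrm{i}\lambda_n t^\alpha)|\le C_0$ the homogeneous part has $L^2(\Omega)$-norm at most $C_0\|y_0\|_{L^2(\Omega)}$ for all $t\in(0,T]$, while for the Duhamel part Minkowski's integral inequality together with the uniform bound $|E_{\alpha,\alpha}(-\mathrm{i}\lambda_n\tau^\alpha)|\le C_0$ (valid by the same argument as in Lemma \ref{mlest}) gives the pointwise bound $C_0\int_0^t(t-s)^{\alpha-1}\|f(s,\cdot)\|_{L^2(\Omega)}\,\d s$ on its $L^2(\Omega)$-norm; integrating over $(0,T)$ and applying Young's convolution inequality bounds its $L^1(0,T;L^2(\Omega))$-norm by $\tfrac{C_0T^\alpha}{\alpha}\|f\|_{L^1(0,T;L^2(\Omega))}$. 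Together these yield the asserted estimate, and, the partial sums being dominated by summable majorants, they also give convergence in $C((0,T];L^2(\Omega))$ (indeed on $[0,T]$ for the homogeneous part, with $y(0,\cdot)=y_0$).

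To see that $y$ solves $(\mathcal{P}_{f,y_0})$, I would differentiate \eqref{solution formula} term by term, using $\partial_t^\alpha E_{\alpha,1}(-\mathrm{i}\lambda_n t^\alpha)=-\mathrm{i}\lambda_n E_{\alpha,1}(-\mathrm{i}\lambda_n t^\alpha)$ and the reproducing identity $\partial_t^\alpha(J_n h)=h-\mathrm{i}\lambda_n J_n h$, where $J_n h(t):=\int_0^t(t-s)^{\alpha-1}E_{\alpha,\alpha}(-\mathrm{i}\lambda_n(t-s)^\alpha)h(s)\,\d s$; interchanging $\partial_t^\alpha$, $\mathcal{L}$ and the summation is justified for $t>0$ because the refined estimate in Lemma \ref{mlest} gives $\lambda_n|E_{\alpha,1}(-\mathrm{i}\lambda_n t^\alpha)|\le C_0t^{-\alpha}$, so that $\mathcal{L}y(t,\cdot)=-\sum_n\lambda_n y_n(t)\varphi_n$ converges in $L^2(\Omega)$ with norm $O(t^{-\alpha})$. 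For uniqueness, if $y\in L^1(0,T;L^2(\Omega))\cap C((0,T];L^2(\Omega))$ solves $(\mathcal{P}_{0,0})$, then $y_n(t):=\langle y(t,\cdot),\varphi_n\rangle$ satisfies $\partial_t^\alpha y_n=-\mathrm{i}\lambda_n y_n$, $y_n(0)=0$; recasting this as the weakly singular Volterra equation $y_n(t)=-\tfrac{\mathrm{i}\lambda_n}{\Gamma(\alpha)}\int_0^t(t-s)^{\alpha-1}y_n(s)\,\d s$ and applying a generalized Gronwall inequality forces $y_n\equiv0$ for every $n$, hence $y\equiv0$.

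For the analytic extension I would replace $t$ by $z$ in the homogeneous part of \eqref{solution formula}: the sector $\Sigma$ in \eqref{sector} is designed so that $\arg(-\mathrm{i}z^\alpha)$ stays, modulo $2\pi$, in $[\mu,\pi]\cup[-\pi,-\mu]$ for all $z\in\Sigma$, whence Lemma \ref{mlest} yields $\|y(z,\cdot)\|_{L^2(\Omega)}\le C_0\|y_0\|_{L^2(\Omega)}$ uniformly on $\Sigma$; since each partial sum is $L^2(\Omega)$-valued holomorphic on the interior of $\Sigma$ ($z\mapsto z^\alpha$ holomorphic there, $E_{\alpha,1}$ entire) and the tails are bounded uniformly in $z$ by $C_0^2\sum_{n>N}|\langle y_0,\varphi_n\rangle|^2\to0$, the convergence is uniform, hence $y$ is holomorphic on $\Sigma$ and lies in $L^\infty(\Sigma;L^2(\Omega))$. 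I expect the main obstacle to lie in the rigorous term-by-term justifications of the third step — in particular the behaviour of $\mathcal{L}y$ as $t\to0^+$ for merely $L^1$-in-time sources, where $y(t,\cdot)$ need not belong to $\mathcal{D}(\mathcal{L})$ so that the equation must be read in a suitable weak sense — and in confirming that the $\mu$ supplied by Lemma \ref{mlest} keeps $-\mathrm{i}z^\alpha$ within its admissible sector throughout $\Sigma$.
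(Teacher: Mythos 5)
Your proposal is correct and follows essentially the same route as the paper: the existence, representation formula, and a priori estimate are obtained by the standard eigenfunction-decoupling/Laplace-transform argument of Sakamoto--Yamamoto type (which the paper simply cites rather than reproduces), and your analyticity argument — holomorphy of the partial sums on $\Sigma$ plus the uniform tail bound $C_0^2\sum_{n>N}|\langle y_0,\varphi_n\rangle|^2$ from Lemma \ref{mlest}, followed by the computation of $\arg(-\mathrm{i}z^\alpha)$ in terms of $\arg z$ — is exactly the paper's. The only caveats you raise (sense of the equation at $t=0^+$ for $L^1$ sources, admissibility of $\mu\le\pi/2$) are real but are not addressed by the paper either.
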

\begin{proof}
The existence and the representation formula are classical, and can be obtained from \cite{CEMY25} in a similar way to \cite{SY11}. 
Regarding the analyticity, we know that $E_{\alpha, 1}\left(-\mathrm{i}\lambda_{n} z\right)$ is an entire function; see, e.g., \cite[Proposition 3.1]{Gor20}. Then, $E_{\alpha, 1}\left(-\mathrm{i}\lambda_{n} z^{\alpha}\right)$ is analytic 
in
$$
\Sigma=\{z \in \mathbb{C}\setminus\{0\} \colon \mu \le |\arg (-\mathrm{i} 
z^\alpha)| \le \pi\}.
$$
Thus, $y_{N}(z,\cdot)=\displaystyle\sum_{n=1}^{N}\langle y_0, \varphi_{n}\rangle E_{\alpha, 1}\left(-\mathrm{i}\lambda_{n} z^{\alpha}\right) \varphi_{n}$ is 
analytic in $\Sigma$, and by Lemma \ref{mlest}, we have 
\begin{equation}
\left\|y_{N}(z,\cdot)-y(z,\cdot)\right\|_{L^{2}(\Omega)}^{2}\leq C_{0}^2 \sum_{n=N+1}^{\infty}\left|\langle y_0, \varphi_{n}\rangle\right|^{2}, \quad z \in \Sigma.
\end{equation}
Therefore, $\lim\limits_{N \to \infty}\left\|y_{N}-y\right\|_{L^{\infty}\left(\Sigma; L^{2}(\Omega)\right)}=0$, which yields that also $y$ is analytic 
in $\Sigma$. 
Now, setting $\theta:=\arg z \in(-\pi, \pi]$, we have
$$\begin{aligned}
&\arg\left(-\mathrm{i} z^\alpha\right)= \begin{cases}
\alpha \theta-\dfrac{\pi}{2}, & \theta>-\dfrac{\pi}{2 \alpha},\\[2ex]
\alpha \theta+\dfrac{3 \pi}{2}, & \theta \leq-\dfrac{\pi}{2 \alpha}.
\end{cases}
\end{aligned}$$
Hence, by elementary calculations, we can prove that $\Sigma$ is given by \eqref{sector}.
\end{proof}

\section{Proofs of the main results}
The next proof is a modification of its counterpart in \cite{SY11}.
\subsection{\textbf{Proof of Theorem \ref{thm1}}}
%\begin{proof}[Proof of Theorem \ref{thm1}]
We keep the same notations as in the proof of Proposition \ref{FP}. Since $y$ is analytic on the sector $\Sigma$, which contains the positive real axis for a choice $\mu<\frac{\pi}{2}$, we obtain
\begin{equation}\label{eqanE}
y(t,\cdot)=\sum_{n=1}^{\infty}\langle y_0, \varphi_{n}\rangle E_{\alpha, 1}\left(-\mathrm{i} \lambda_{n} t^{\alpha}\right) \varphi_{n}=0 \quad \mbox{in}\;\; L^2(E),\quad t>0. 
\end{equation}
On the other hand, using \eqref{mlestt}, we obtain for any $z\in \mathbb{C}$ and $t>0$,
$$
\left\|e^{-zt}y_{N}(t,\cdot)\right\|_{L^2(E)}\le C_0  
e^{-\operatorname{Re}(z)t} \|y_0\|_{L^2(\Omega)}=:\nu(t)\quad 
\quad \mbox{for all $t>0$ and all $N\in \mathbb{N}$}.
$$
Note that $\nu\in L^1(0,\infty)$ if $\operatorname{Re}(z)>0$. Then, applying Lebesgue's dominated convergence theorem for functions with values in $L^2(E)$ and using \eqref{eqanE} yields
\begin{equation}\label{AA}
	\displaystyle\sum_{n=1}^{\infty}\langle y_0, \varphi_{n}\rangle \left[\int_{0}^{\infty}e^{-zt} E_{\alpha, 1}\left(-\mathrm{i}\lambda_{n} t^{\alpha}\right)\,\d t\right] \varphi_{n}
	= \int_{0}^{\infty}e^{-zt}y(t,\cdot)\,\d t=0 \quad\mbox{in}\;\; L^{2}(E),\;\; \operatorname{Re}(z)>0.
\end{equation}
Now, by grouping the eigenfunctions corresponding to the same eigenvalue, we denote by
$\sigma(-\mathcal{L})=\left\{\mu_{k} \right\}_{k \in \mathbb{N}}$ the strictly increasing sequence of eigenvalues of $-\mathcal{L}$ and by $\left\{\varphi_{k j}\right\}_{1 \leq j \leq m_{k}}$ an orthonormal basis of $\ker\left(\mu_{k}+\mathcal{L}\right)$. 
Therefore, \eqref{AA} can be rewritten as
\begin{equation}\label{BB}
\sum_{k=1}^{\infty}\left(\left[\int_{0}^{\infty}e^{-zt} E_{\alpha, 1}\left(-\mathrm{i}\mu_{k} t^{\alpha}\right)\,\d t\right]\sum_{j=1}^{m_{k}}\langle y_0, \varphi_{kj}\rangle \varphi_{k j} \right)=0 \quad \mbox{in}\;\; L^{2}(E),\;\; \operatorname{Re}(z)>0.
\end{equation}
We use the Laplace transform (see \cite[formula (1.80)]{Pod99}) and the 
analytic continuation to obtain
\begin{equation} \label{ltf}
\int_{0}^{\infty} e^{-z t} E_{\alpha, 1}\left(-\mathrm{i} \mu_{k} t^{\alpha}\right)\, \d t=\frac{z^{\alpha-1}}{z^{\alpha}+\mathrm{i}\mu_{k}}, \quad \operatorname{Re} z>0.
\end{equation}
Therefore, \eqref{BB} and \eqref{ltf} imply
\begin{equation}\label{eqac}
\sum_{k=1}^{\infty} \sum_{j=1}^{m_{k}}\langle y_0, \varphi_{kj}\rangle \frac{1}{\eta+\mathrm{i}\mu_{k}} \varphi_{k j}=0 \quad \quad\mbox{in}\;\; L^{2}(E),\; \operatorname{Re} \eta>0.
\end{equation}
For all $n\in \mathbb{N}$, we define
\begin{align*}
	S_n(\eta,\cdot):=&\sum_{k=1}^{n} \sum_{j=1}^{m_{k}}\langle y_0, \varphi_{kj}\rangle \frac{1}{\eta+\mathrm{i}\mu_{k}} \varphi_{k j} \quad \quad\mbox{in}\;\; L^{2}(\Omega),\; \eta\in\mathbb{C}\setminus\{-\mathrm{i}\mu_k \colon k\ge 1\}.
\end{align*}
%For each $\ell\ge 1$, we set $D_\ell:=\overline{D}(-\mathrm{i}\mu_\ell, \delta)$ to be the closed disk centered at $-\mathrm{i}\mu_\ell$ with radius $$\delta:=\displaystyle\frac{1}{4}\min\{ |\mu_1|, \; |\mu_k-\mu_\ell|,\quad k\neq \ell\}>0.$$
%Note that, by \cite[Corollary 1.1.12]{En89}, $\displaystyle\bigcup_{\ell\ge 1}D_\ell$ is closed. 
Since $\{\mu_k\}$ tends to $\infty$, the set $\{-\mathrm{i}\mu_k \colon k\ge 1\}$ is closed. Then, for every compact subset $K$ of $\mathbb{C}\setminus\{-\mathrm{i}\mu_k \colon k\ge 1\}$, we have
$$\delta:=d(K, \{-\mathrm{i}\mu_k \colon k\ge 1\})>0.$$
Let $n>m\ge 1$. Then 
\begin{align}
	\left\|S_{n}(\eta,\cdot)-S_{m}(\eta,\cdot)\right\|_{L^{2}(\Omega)}^{2}
	&\le \delta^{-2}\sum_{k=m+1}^{n}\sum_{j=1}^{m_k}\left|\langle y_0, \varphi_{k_j}\rangle\right|^{2}, \quad \eta \in K. \label{CS}
\end{align} 
Therefore, for every $\eta\in K$, $\{S_n(\eta,\cdot)\}$ is a Cauchy sequence in $L^2(\Omega)$. Hence, its sum 
$S$ is well-defined on $\mathbb{C}\setminus\{-\mathrm{i}\mu_k \colon k\ge 1\}$:
\begin{align*}
	S(\eta,\cdot):=&\sum_{k=1}^{\infty} \sum_{j=1}^{m_{k}}\langle y_0, \varphi_{kj}\rangle \frac{1}{\eta+\mathrm{i}\mu_{k}} \varphi_{k j} \quad \quad\mbox{in}\;\; L^{2}(\Omega),\; \eta\in \mathbb{C}\setminus\{-\mathrm{i}\mu_k \colon k\ge 1\}.
\end{align*}
Then, by \eqref{CS}, we obtain that $\{S_n\}$ converges uniformly to $S$ on $K$. Hence, 
$S$ is analytic in $\mathbb{C}\setminus\{-\mathrm{i}\mu_k \colon k\ge 1\}$. It follows from \eqref{eqac} that
 \begin{equation}\label{eqac2}
 	S(\eta,\cdot)=\sum_{k=1}^{\infty} \sum_{j=1}^{m_{k}}\langle y_0, \varphi_{kj}\rangle \frac{1}{\eta+\mathrm{i}\mu_{k}} \varphi_{k j}=0 \; \quad\mbox{in}\;\; L^{2}(E),\; \eta\in \displaystyle \mathbb{C}\setminus\{-\mathrm{i}\mu_k \colon k\ge 1\}. 
 \end{equation}
Now, we fix $\ell\in \mathbb{N}$. By the uniform convergence, we can integrate \eqref{eqac2} termwise on the positively oriented circle $\mathcal{C}_{\ell}:=\mathcal{C}(-\mathrm{i}\mu_\ell, r_l)$ with radius $r_\ell>0$ chosen such that $-\mathrm{i}\mu_k\notin \overline{\mathcal{C}_{\ell}}$ for all $k\neq \ell$. Then, we obtain
$$
y_{\ell} := \sum_{j=1}^{m_{\ell}}\langle y_0, \varphi_{\ell j}\rangle \varphi_{\ell j}=\frac{1}{2\pi \mathrm{i}}\int_{\mathcal{C}_{\ell}}S(\eta,\cdot)\,\d\eta=0 \quad \mbox{in}\;\; L^{2}(E).
$$
Since $\left(\mathcal{L}+\mu_{\ell}\right) y_{\ell}=0$ in $L^2(\Omega)$ and $y_{\ell}=0$ in $L^{2}(E)$, we use the unique continuation from subsets of positive measure. See, e.g., \cite[Th\'eor\`eme 2]{Ro88}. This implies that $y_{\ell}=0$ in $L^2(\Omega)$ for all $\ell \in \mathbb{N}$. Therefore, we conclude that $y_0=0$ in $L^2(\Omega)$. \qed
%\end{proof}

As in Theorem \ref{thm1}, we can also improve \cite[Theorem 4.3]{SY11} for fractional diffusion equations with an $L^{\infty}$-decay estimate. We have the following uniqueness result.
\begin{prop}
Let $y_0\in L^2(\Omega)$ and assume that for any $m \in \mathbb{N}$, there exists a constant $C(m)>0$ such that the solution of $(\mathcal{P}_{0,y_0})$, considered on $(0,\infty)$, satisfies
\begin{equation}\label{decay}
\|y(t,\cdot)\|_{L^{2}(E)} \leq \frac{C(m)}{t^{m}} \quad \text { as } t \to \infty.
\end{equation}
Then $y=0$ in $(0, \infty)\times \Omega$.
\end{prop}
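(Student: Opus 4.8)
The plan is to show that every ``moment'' $w_j:=(-\mathcal L)^{-j}y_0=\sum_{k\ge1}\mu_k^{-j}y_k$ vanishes on $E$ — here $\{\mu_k\}$ are the distinct eigenvalues of $-\mathcal L$ and $y_k:=\sum_{\ell=1}^{m_k}\langle y_0,\varphi_{k\ell}\rangle\varphi_{k\ell}$, in the notation of the proof of Theorem~\ref{thm1} — and then to recover the eigenprojections $y_k$ from the $w_j$ by the same contour‑integration argument that ends the proof of Theorem~\ref{thm1}. Each $w_j$ lies in $L^2(\Omega)$ because the $y_k$ are mutually orthogonal and $\sum_k\mu_k^{-2j}\|y_k\|_{L^2(\Omega)}^2\le\mu_1^{-2j}\|y_0\|_{L^2(\Omega)}^2<\infty$. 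Writing the solution \eqref{solution formula} with $f\equiv0$ grouped by eigenspaces, $y(t,\cdot)=\sum_{k\ge1}E_{\alpha,1}(-\mathrm{i}\mu_kt^\alpha)y_k$, and observing that $\arg(-\mathrm{i}\mu_kt^\alpha)=-\tfrac{\pi}{2}$ while $0<\alpha<1$, I would pick $\mu$ as in Lemma~\ref{mlest} with $\mu\le\tfrac{\pi}{2}$ and invoke the asymptotic expansion of the Mittag--Leffler function (see \cite[Theorem~1.4]{Pod99}) on this ray: it gives, uniformly as $\mu_kt^\alpha\to\infty$, $\big|E_{\alpha,1}(-\mathrm{i}\mu_kt^\alpha)+\sum_{j=1}^{N}\mathrm{i}^{\,j}\,\Gamma(1-\alpha j)^{-1}\mu_k^{-j}t^{-\alpha j}\big|\le C_N(\mu_kt^\alpha)^{-N-1}$, hence, summing over $k$,
\[
y(t,\cdot)=-\sum_{j=1}^{N}\frac{\mathrm{i}^{\,j}}{\Gamma(1-\alpha j)}\,t^{-\alpha j}\,w_j+R_N(t,\cdot),\qquad \|R_N(t,\cdot)\|_{L^2(\Omega)}\le C_N\,t^{-\alpha(N+1)}
\]
for all sufficiently large $t$.

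I would then extract the moments on $E$ by induction on $j$. When $\alpha j\in\mathbb N$ the coefficient $1/\Gamma(1-\alpha j)$ vanishes and the corresponding term is absent. Assuming $w_{j'}=0$ on $E$ for all $j'<j$ with $\alpha j'\notin\mathbb N$, if $\alpha j\notin\mathbb N$ then restricting the expansion (with $N=j$) to $E$ leaves only the $j$‑th term plus $R_j$, so
\[
\frac{\|w_j\|_{L^2(E)}}{|\Gamma(1-\alpha j)|}\,t^{-\alpha j}\le\|y(t,\cdot)\|_{L^2(E)}+\|R_j(t,\cdot)\|_{L^2(\Omega)}\le C(m)\,t^{-m}+C_j\,t^{-\alpha(j+1)};
\]
multiplying by $t^{\alpha j}$, choosing $m>\alpha j$ in \eqref{decay} and letting $t\to\infty$ gives $w_j=0$ on $E$. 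Starting from $j=1$ (admissible since $\Gamma(1-\alpha)\ne0$) this yields $w_j=0$ on $E$ for every $j\ge1$ with $\alpha j\notin\mathbb N$.

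To conclude I would distinguish two cases. If $\alpha\notin\mathbb Q$, then $w_j=0$ on $E$ for all $j\ge1$, so the $L^2(\Omega)$‑valued function $S(\eta,\cdot)=\sum_k(\eta+\mathrm{i}\mu_k)^{-1}y_k$ of the proof of Theorem~\ref{thm1} has all Taylor coefficients at $\eta=0$ vanishing on $E$; being analytic on $\mathbb C\setminus\{-\mathrm{i}\mu_k:k\ge1\}$ it then vanishes on $E$ throughout that domain, and integrating it over a small circle about $-\mathrm{i}\mu_\ell$ produces $y_\ell=0$ on $E$. If $\alpha=p/q$ with $\gcd(p,q)=1$ (so $q\ge2$), the controllable indices are $\{j:q\nmid j\}$, and I would instead consider
\[
G(s,\cdot):=\sum_{\substack{j\ge1\\ q\nmid j}}s^{\,j-1}w_j=\sum_{k\ge1}\Big(\frac{1}{\mu_k-s}-\frac{s^{\,q-1}}{\mu_k^{\,q}-s^{\,q}}\Big)y_k\qquad(|s|<\mu_1),
\]
which vanishes on $E$; as in the proof of Theorem~\ref{thm1} (orthogonality of $\{y_k\}$ plus a uniform bound on compacta avoiding the poles) $G$ extends to an $L^2(\Omega)$‑valued meromorphic function vanishing on $E$ off its poles, and it has a simple pole at $\mu_\ell$ with residue $-\tfrac{q-1}{q}\,y_\ell\neq0$, so integrating over a small positively oriented circle about $\mu_\ell$ again yields $y_\ell=0$ on $E$. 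In either case $(\mathcal L+\mu_\ell)y_\ell=0$ in $\Omega$ and $y_\ell=0$ on the positive‑measure set $E$, so unique continuation \cite[Th\'eor\`eme~2]{Ro88} gives $y_\ell=0$ in $\Omega$ for every $\ell$; hence $y_0=\sum_\ell y_\ell=0$ and $y\equiv0$.

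The main obstacle is exactly the disappearance of the moments $w_j$ with $\alpha j\in\mathbb N$ from the large‑time asymptotics (their coefficient $1/\Gamma(1-\alpha j)$ is zero), so \eqref{decay} gives no information about them. This is harmless for irrational $\alpha$, but for $\alpha=p/q$ it forces the use of the ``thinned'' generating function $\sum_{q\nmid j}s^{\,j-1}w_j$, and one must verify that it still has a genuine simple pole with nonzero residue proportional to $y_\ell$ at each eigenvalue $\mu_\ell$ — otherwise the contour‑integration step of Theorem~\ref{thm1} could not recover $y_\ell$. The remaining ingredients (termwise passage to the limit, uniformity of the Mittag--Leffler remainder, and the analytic‑continuation bookkeeping) are routine adaptations of the proof of Theorem~\ref{thm1}.
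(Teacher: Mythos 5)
Your argument is correct and is exactly the route the paper intends: it reproduces the proof of Theorem 4.3 of \cite{SY11} (asymptotic expansion of $E_{\alpha,1}$ on the ray $\arg z=-\pi/2$, inductive extraction of the moments $(-\mathcal L)^{-j}y_0$ on $E$, a generating function with residue extraction of the eigenprojections, then unique continuation from a set of positive measure), with pointwise bounds replaced by $L^2(E)$-bounds as the paper indicates. Your explicit treatment of the rational case $\alpha=p/q$ via the thinned generating function with residue $-\tfrac{q-1}{q}\,y_\ell$ is a correct and welcome filling-in of a detail the cited proof glosses over.
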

The proof is similar to \cite[Theorem 4.3]{SY11}, but one should use $L^2$-estimates instead of pointwise estimates.

\subsection{\textbf{Proof of Theorem \ref{thm2}}}
For $\beta > 0$, we set 
$$
J^{\beta} w(t):= \frac{1}{\Gamma(\beta)}\int^t_0 (t-s)^{\beta-1}w(s)\,\d s, \quad w\in L^1(0,T;L^2(\Omega)).
$$
Let $y(g)$ be the solution of $(\mathcal{P}_{\rho g,0})$ and $v(g)$ be the solution of $(\mathcal{P}_{0,g})$. 
Similarly to the time-fractional diffusion equation (e.g., \cite{LRY17}), 
we can prove Duhamel's principle for the time-fractional Schr\"odinger 
equation, and so we have
$$
J^{1-\alpha} y(g)(t,\cdot) = \int^t_0 \rho(t-s)v(g)(s,\cdot)\,\d s \quad 
\mbox{in}\;\; L^2(\Omega), \; 0<t<T.
$$
Since $y(g) = 0$ on $(0,T) \times E$, we obtain
\begin{equation}\label{eq(2.1)}
\int^t_0 \rho(t-s)v(g)(s,\cdot)\,\d s = 0 \quad
\mbox{in}\;\; L^2(E), \; 0<t<T.
\end{equation}
Choosing $\theta \in C^{\infty}_0(E)$ arbitrarily, we set 
$w_{\theta}(t):= \langle v(g)(t,\cdot),\, \theta\rangle_{L^2(E)}$.
Then, \eqref{eq(2.1)} yields 
$$
\int^t_0 \rho(t-s)w_{\theta}(s)\,\d s = 0, \quad 0<t<T.
$$
In view of $\rho \not\equiv 0$ in $(0,T)$, the Titchmarsh's 
convolution theorem (\cite[Theorem VII]{Ti26}) implies
that we can find a constant $t_*=t_*(\theta) \in (0,T]$ such that 
\begin{equation}\label{eq(2.2)}
w_{\theta}(t) = 0 \qquad \text{for all }\; 0<t<t_*(\theta) \quad (w_{\theta} \text{ is continuous}).
\end{equation}
By the analyticity of $v(g): (0,\infty) \mapsto L^2(\Omega)$, it follows 
that $w_{\theta}$ is analytic in $t>0$. Hence, \eqref{eq(2.2)} yields
\begin{equation}\label{eq(2.3)} 
\langle v(g)(t,\cdot),\, \theta\rangle_{L^2(E)} = 0 \quad \text{ for all }t \in (0,\infty).
\end{equation}
Since $C^{\infty}_0(E)$ is dense in $L^2(E)$, then \eqref{eq(2.3)} implies
$$
v(g)(t,\cdot) = 0 \quad \text{ in } L^2(E), \quad \text{ for all }t \in (0,\infty).
$$
Therefore, Theorem \ref{thm1} implies that $g=0$ in $\Omega$.

\begin{rmk}
We emphasize that our approach applies to any self-adjoint operator on $L^2(\Omega)$ possessing a Hilbert basis of eigenfunctions that satisfy the unique continuation property. Consequently, it can be used in various settings with different boundary conditions.
\end{rmk}

\section*{Funding}
The work was supported by Grant-in-Aid for Challenging Research (Pioneering) 
21K18142 of Japan Society for the Promotion of Science.

\section*{Declaration of competing interest}
The authors declare that they have no known competing interests that could have appeared to influence the work reported in this paper.

\section*{Data availability}
No data was used for the research described in the article.

\end{document}